\newtheorem{theorem}{Theorem}[section]
\newtheorem{lemma}[theorem]{Lemma}
\theoremstyle{definition}
\numberwithin{equation}{section}
\newcommand{\C}{{\mathbb C}}
\newcommand{\Z}{{\mathbb Z}}
\newcommand{\Q}{{\mathbb Q}}
\begin{document}


\baselineskip=17pt


\title[Correlation and lower bounds]{Correlation and lower bounds of arithmetic expressions}

\author[F. Chamizo]{Fernando Chamizo}
\address{Departamento de Matem\'aticas and ICMAT\\ Universidad Aut\'onoma de Madrid\\
28049 Madrid, Spain}
\email{fernando.chamizo@uam.es}

\date{May 11, 2023}

\begin{abstract}
We explore the use of correlation with simple functions to get lower bounds for arithmetic quantities. In particular, we apply this idea to the power moments of the error term when counting visible lattice points in large spheres. 
\end{abstract}

\subjclass[2020]{Primary 11N37; Secondary 11P21, 11L07}

\keywords{arithmetic functions, visible points, exponential sums}

\maketitle

\section{Average and amplification}

Very often in analytic number theory one has to deal with a certain expression $Q$ that, considering some harmonic companions \cite{iwaniec_h}, can be included in a kind of spectral family $\{Q_j\}_{j\in J}$, 
say $Q=Q_{j_0}$. In this situation, average results are usually easier to get than bounds or asymptotic formulas for~$Q$. For instance, variant of Parseval's identity can lead to something of the form 
\[
 \sum_{j\in J}|Q_j|^2\sim F,
\]
which suggests that $|Q_j|^2$ is typically like $F/|J|$  (if $J$ is not finite there is a chance to introduce weights in the summation) and implies the $\Omega$-result stating that at least one of the $|Q_j|^2$ cannot be asymptotically below $F/|J|$. 

Average results do not give good individual estimates because dropping all the terms except $Q$ is wasteful.
The amplification method, developed by H. Iwaniec and collaborators to get a number of conspicuous results (e.g. 
\cite{FrIw},
\cite{DuFrIw},
\cite{iwaniec_L},
\cite{IwSa}), circumvents this problem. 
As a guide for the reader, to our taste \cite{FrIw_n} is its most transparent application and \cite{IwSa} the most impressive (cf. \cite[\S10]{BlHo}, \cite[pp.\,93--100]{allthat}). 
Let us review briefly the simple and powerful schematic idea in a somewhat restricted setting. A family of linear forms $\mathcal{L}_j(\vec{a})=\sum_n a_n \lambda_j(n)$ is introduced in such a way that there is a quantifiable cancellation in $\sum_j |Q_j|^2\lambda_j(n)\overline{\lambda}_j(m)$ allowing to prove a nontrivial average result of the form 
\[
 \sum_j |Q_j|^2 |\mathcal{L}_j(\vec{a})|^2
 \le K^2\|\vec{a}\|^2.
\]
A choice $\vec{a}=\vec{a}_0$ giving a large value of $|\mathcal{L}_{j_0}(\vec{a})|$ \emph{amplifies} the contribution of $|Q_{j_0}|^2$ to the sum showing 
\[
 |Q|=|Q_{j_0}|
 \le
 \frac{K\|\vec{a}_0\|}{|\mathcal{L}_{j_0}(\vec{a}_0)|}.
\]
The amplifier will be stronger if the vectors $\{\lambda_j(n)\}_n$ keep certain quasi-orthogonality for different values of~$j$ and perfect
orthogonality would ideally allow to select a single term.
It guides our intuition to construct good amplifiers but note that there is no need for proving anything in this direction.

\

Dealing with a specific lattice point problem,
in \cite{ChCrUb} it was introduced a method  that bears a point of resemblance to the amplification philosophy but  producing lower bounds instead of upper bounds. Let us say that our spectral family is now a continuous one represented by an oscillatory function $Q(t)$ that is too complicate, so that one cannot determine the asymptotics of  $\int |Q|^2\, d\mu$. If we find a simpler function $g$, which could be called a \emph{resonator}, in such a way that there is a provable correlation 
\[
 \int Qg\, d\mu \ge F>0
\]
then H\"older's inequality gives 
\[
 \|Q\|_p\ge \frac{F}{\| g\|_q}
 \qquad\text{with}\quad \frac{1}{p}+\frac{1}{q}=1,
 \quad p,q\in [1,\infty].
\]
So, lower bounds for the moments of $Q$ follow from upper bounds for the moments of the simpler function~$g$. 
If $\mu$ is normalized as a probability measure, $\|Q\|_\infty\ge \|Q\|_p$ and an $\Omega$-result for $Q$ is deduced. 
Ideally, we look for~$g$ being a simple proxy of $\overline{Q}$ amplifying the contribution to the integral via some kind of positivity. In contrast, in the amplification method the artificial squared form tries to mimic the delta symbol. 

\

The purpose of this paper is to show some instances of this idea. The main one is a modest improvement on \cite{ChCrUb} in which the resonator was based on $\sum_{d^2\mid n}\mu(d)=\mu^2(n)\ge 0$.  

\section{Main results}

As usual, the notation $e(x)$ abbreviates $e^{2\pi ix}$. 
We also employ $A\ll B$ meaning $A=O(|B|)$ with Landau's notation and $A=\Omega(B)$ as the opposite of $A=o(B)$.

\medskip

We start pushing the correlation technique to improve a little our knowledge about the lattice point problem considered in \cite{ChCrUb}. 

Recall that the \emph{visible points} in $\Z^3-\{\vec{0}\}$ are those having coprime coordinates. If $\mathcal{N}^*(R)$ is the number of them in the ball $\|\vec{x}\|\le R$, it is fairly easy to get $\mathcal{N}^*(R)\sim \frac{4\pi}{3\zeta(3)}R^3$. 
The main result in \cite{ChCrUb} assures
\begin{equation}\label{2moment}
 \int_R^{2R}|E^*|^2\gg R^3\log R
 \qquad\text{where}\qquad
 E^*(R)
 =
 \mathcal{N}^*(R)- \frac{4\pi}{3\zeta(3)}R^3.
\end{equation}
Conjecturally $E^*(R)=O(R^{1+\epsilon}\big)$ for any $\epsilon>0$ and \eqref{2moment} implies that it is false for $\epsilon=0$ and, in fact, $E^*(R)=\Omega(R\sqrt{\log R})$. Here we treat other moments showing that the logarithmic factors are unavoidable in them. 

\begin{theorem}\label{visible}
 For any $p>1$, we have 
 \[
  \int_R^{2R}
  |E^*|^p
  \gg R^{p+1} (\log R)^{p/2}.
 \]
\end{theorem}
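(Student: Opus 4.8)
We distinguish the ranges $p\ge 2$ and $1<p<2$. If $p\ge 2$ the bound is a formal consequence of \eqref{2moment}: H\"older's inequality with exponents $p/2$ and $p/(p-2)$ gives $\int_R^{2R}|E^*|^2\le\big(\int_R^{2R}|E^*|^p\big)^{2/p}R^{(p-2)/p}$, whence $\int_R^{2R}|E^*|^p\ge\big(\int_R^{2R}|E^*|^2\big)^{p/2}R^{-(p-2)/2}\gg R^{p+1}(\log R)^{p/2}$. From now on assume $1<p<2$ and set $q=\tfrac{p}{p-1}\in(2,\infty)$. The plan is to build a resonator $g$ with
\[
 \int_R^{2R}E^*(R)\,g(R)\,dR\gg R^2\log R,\qquad \|g\|_{L^q([R,2R])}\ll R^{1/q}(\log R)^{1/2},
\]
for then H\"older's inequality yields $\|E^*\|_{L^p([R,2R])}\ge\big(\int_R^{2R}E^*g\big)\big/\|g\|_{L^q([R,2R])}\gg R^{2-1/q}(\log R)^{1/2}=R^{(p+1)/p}(\log R)^{1/2}$, and raising to the power $p$ gives the theorem. (With $q=2$ the same scheme reproves \eqref{2moment}.)

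To define $g$, I would start from the smoothed, truncated Voronoi formula for the sphere, $E(x)=-\tfrac{x}{\pi}\sum_{n\le N}\tfrac{r_3(n)}{n}\cos(2\pi x\sqrt n)+(\text{error})$ with $N$ a fixed power of $R$; combined with $E^*(R)=\sum_d\mu(d)E(R/d)$ it shows that the coefficient of $\cos(2\pi R\sqrt m)$ in $E^*(R)$ equals $-\tfrac{R}{\pi m}S(m)$ with $S(m)=\sum_{d\ge1}\mu(d)r_3(d^2m)/d^3$ (absolutely convergent, $S(m)\ll m^{1/2+\epsilon}$). The resonator is the squarefree-supported proxy of $\overline{E^*}$,
\[
 g(R)=-\sum_{\substack{m\le M\\ \mu^2(m)=1}}\frac{r_3(m)}{m}\cos(2\pi R\sqrt m),\qquad M=R^{\delta(p)},
\]
for a small $\delta(p)>0$. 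The restriction $\mu^2(m)=1$ is the incarnation here of $\sum_{d^2\mid n}\mu(d)=\mu^2(n)$; its point is that among the $\mathbb{Q}$-linearly independent numbers $\{\sqrt m:\mu^2(m)=1\}$ each $\sqrt m$ appears in $g$ through a single harmonic, which is what makes the $L^q$ estimate work for every $q$.

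For the correlation lower bound I would compute $\int_R^{2R}E^*(R)g(R)\,dR$ against a smooth cutoff in $R$: the diagonal of the frequencies $\sqrt n/d=\sqrt m$ produces the main term $\tfrac{3R^2}{4\pi}\sum_{m\le M,\ \mu^2(m)=1}S(m)r_3(m)/m^2$, while the off-diagonal near-collisions, the Voronoi truncation error, and the non-integer frequencies of $E^*$ contribute $o(R^2)$ provided $N$ and $M=R^{\delta(p)}$ are small enough powers of $R$; here the only fiddly point is the standard Voronoi bookkeeping, using the rapid decay of $\widehat w$, the mean-square size of the truncation error, and the spacing $|\sqrt n/d-\sqrt m|\gg(N\sqrt M)^{-1}$ for $n\ne d^2m$. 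Two arithmetic facts then close this step: first, for squarefree $m$ one has $S(m)=c_m\,r_3(m)$ with $c_m$ pinched between two absolute positive constants — by the Hurwitz class number formula $r_3(d^2m)/r_3(m)$ is multiplicative in the odd part of $d$ with $p$-factor $\asymp p$, so $S(m)/r_3(m)$ is a convergent positive Euler product; second, $\sum_{m\le M,\ \mu^2(m)=1}r_3(m)^2/m^2\gg\log M$, the Rankin–Selberg-type estimate behind \eqref{2moment} (in essence $\sum_{m\le x}L(1,\chi_{-m})^2\asymp x$ followed by partial summation). As every term of the main sum is positive, $\int_R^{2R}E^*g\gg R^2\log M\asymp R^2\log R$.

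The resonator bound $\|g\|_{L^q([R,2R])}\ll R^{1/q}(\log M)^{1/2}$ is the heart of the matter; it suffices to treat even $q=2\ell$ and interpolate, $q=2$ being Parseval. Expanding $\int_R^{2R}g^{2\ell}$ against a smooth cutoff, a term indexed by $(\vec m,\vec\epsilon)$ is essentially $R\,(\text{coeff})\,\widehat w\big(R\sum_j\epsilon_j\sqrt{m_j}\big)$; the $\le 2\ell$ distinct $\sqrt{m_j}$ span over $\mathbb{Q}$ a space of dimension $\le 2\ell$, so $\alpha=\sum_j\epsilon_j\sqrt{m_j}$ has degree $\le 2^{2\ell}$, and when $\alpha\ne0$ its norm is a nonzero rational integer while its conjugates are $\ll\sqrt M$, forcing $|\alpha|\gg_\ell M^{-C_\ell}$. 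Choosing $\delta(p)$ small enough that $M^{C_\ell}=o(R)$, only the terms with $\alpha=0$ survive; but $\alpha=0$ forces the $+$ and $-$ multisets of the $m_j$ to coincide, which is exactly the diagonal of $\int_{\mathbb{T}^\infty}g^{2\ell}$ for $g=-\sum_m\tfrac{r_3(m)}{m}\cos(2\pi\theta_m)$ regarded as a sum of \emph{independent} mean-zero variables (one coordinate per squarefree $m$ — this is where the squarefree support is used). Hence $\int_R^{2R}g^{2\ell}=(1+o(1))R\int_{\mathbb{T}^\infty}g^{2\ell}$, and Rosenthal's inequality gives $\int_{\mathbb{T}^\infty}g^{2\ell}\ll_\ell\big(\sum_m r_3(m)^2/m^2\big)^{\ell}+\sum_m r_3(m)^{2\ell}/m^{2\ell}\ll_\ell(\log M)^{\ell}$, the last sum being $O_\ell(1)$ since $\sum_{m\text{ squarefree}}r_3(m)^{2\ell}/m^{2\ell}$ converges (via $\sum_{m\le x}L(1,\chi_{-m})^{2\ell}\ll_\ell x$). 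Combining with the correlation bound through H\"older finishes the proof. The step I expect to be the real obstacle is precisely this uniformity — keeping $M$ an honest power of $R$, so that $\log M\asymp\log R$, while the off-diagonal errors (which reduce to the polynomial lower bound for $|\sum_j\epsilon_j\sqrt{m_j}|$, sound because the number of summands, not of primes dividing the $m_j$, bounds the degree) and the Voronoi bookkeeping in the correlation stay negligible; the squarefree support of the resonator is the device that keeps the Rosenthal error term $O_q(1)$ instead of a growing power of $\log M$.
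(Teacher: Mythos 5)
Your overall architecture coincides with the paper's: correlate $E^*$ with a short resonator, bound the resonator's high even moments through the algebraic spacing of $\sum_j\epsilon_j\sqrt{m_j}$ (your norm-of-an-algebraic-integer bound is exactly what Zhai's lemma supplies in Lemma~\ref{moments}), and finish with H\"older; the $p\ge 2$ reduction from \eqref{2moment} is fine (the theorem is only new for $p<2$), and your moments step --- squarefree support killing the near-diagonal, transfer to the random model, Rosenthal --- is a sound and arguably cleaner variant of Lemma~\ref{moments}, where Besicovitch plus a partition count handles the non-squarefree indices of $g_\sigma=\sum_{n\le R^\sigma}\cos(2\pi x\sqrt n)/\sqrt n$. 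Your diagonal evaluation of the correlation ($S(m)\asymp r_3(m)$ via Hecke/Hurwitz multiplicativity, plus $\sum r_3(m)^2/m^2\gg\log M$ over squarefree $m\le M$) is a legitimate alternative to the paper's route through $v(d)$ and Gauss's counting in Theorem~\ref{shorter}.

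The genuine gap is in the correlation lower bound, which you compress into ``standard Voronoi bookkeeping''; that bookkeeping is the technical core, and the three tools you invoke do not suffice as stated. First, the truncation $N$ cannot be a small power of $R$: since $\sum_{n\le x}r_3(n)^2\asymp x^2$, the discarded part of the Voronoi series over $N<n\lesssim R^2$ has normalized mean square $\asymp R^2\log(R^2/N)$ on $[R,2R]$, so Cauchy--Schwarz against $\|g\|_2\asymp(\log M)^{1/2}$ gives a contribution of order $R\log R$, which already swamps the intended main term $\asymp\delta(p)\,R\log R$; one is forced to keep $N\approx M^2\approx R^2(\log R)^{-2/3}$ as in \eqref{sphf}. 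Second, with $N$ that large your spacing estimate fails for most terms: because of the M\"obius sum in \eqref{all2vib} the frequencies are $\sqrt n/d$ with $d$ ranging up to about $R$ (truncating $d$ at a small power of $R$ is not allowed either, by any known bound on $E$), and for $n\ne d^2m$ one only has $|\sqrt n/d-\sqrt m|\ge\big(d\sqrt n+d^2\sqrt m\big)^{-1}$, which for $d$ near $R$ is far smaller than $1/R$; after scaling by $R$ the phase is $O(1)$, so rapid decay of $\widehat{w}$ gives nothing for these near-collisions, and there are very many of them. Handling precisely these off-diagonal terms is the content of Propositions 3.1--3.3 of \cite{ChCrUb}, which the paper re-uses for its shorter resonator (noting the bounds are in absolute value, so fewer terms only helps); with your coefficients $r_3(m)/m\ll m^{-1/2}\log m$ you would additionally need to check that those estimates still leave room under the $R\log R$ main term. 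Without importing or redoing that analysis, the correlation step of your plan does not go through.
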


Of course, this adds new information to \eqref{2moment} only for $p<2$.

\

The next result has a more analytic flavor.
Essentially, it shows that the existence of gaps in the frequency spectrum implies oscillation. 

\begin{theorem}\label{gaps}
 Let $\{\nu_n\}_{n=1}^N \subset\Z^+$ be strictly increasing with $N<\infty$ or $N=\infty$ and consider its gaps $\Lambda_n = \min_{m\ne n}|\nu_m-\nu_n|$.  
 For each $\alpha\in [0,1]$ and $1<n<N$ the inequality
 \[
  \Big|\sum_{k=1}^N a_k\sin(2\pi \nu_k x)\Big| >
  \frac 14
  \mathcal{B}_{n,\alpha}
  |a_n| |x|^\alpha
 \]
 holds for $x$ in a positive measure subset of  $\big[-\frac 12,\frac 12\big]$, where
  \[
  \mathcal{B}_{n,\alpha}
  =
  \frac{\pi^{\alpha-1}(1-\alpha^2)\Lambda_n}{\Lambda_n^{1-\alpha}-(1/5)^{1-\alpha}}
  \quad\text{for $\alpha\ne 1$},
  \qquad
  \mathcal{B}_{n,1}=\lim_{\alpha\to 1^-}\mathcal{B}_{n,\alpha}
 \]
 and
 $\{a_n\}_{n=1}^N\subset \C-\{0\}$ is any sequence assuring the convergence of the series to an $L^\infty$ function if $N=\infty$.
\end{theorem}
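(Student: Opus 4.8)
The plan is to run the resonator argument of Section~1 with exponents $p=\infty$, $q=1$, after factoring out the weight $|x|^\alpha$. On $\big[-\tfrac12,\tfrac12\big]$ the functions $\{\sin(2\pi\nu x)\}_{\nu\in\Z^+}$ are mutually orthogonal, so the natural resonator isolating the $n$-th term of the series is $g(x)=K(x)\sin(2\pi\nu_n x)$, where $K$ is a nonnegative trigonometric polynomial of degree at most $\Lambda_n-1$ with $\int_{-1/2}^{1/2}K=1$; concretely one may take the Fej\'er kernel $K(x)=\Lambda_n^{-1}\big(\sin(\pi\Lambda_n x)/\sin(\pi x)\big)^2$. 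Every frequency occurring in $g$ lies in $(\nu_n-\Lambda_n,\nu_n+\Lambda_n)\cup(-\nu_n-\Lambda_n,-\nu_n+\Lambda_n)$, and the gap hypothesis---together with $\Lambda_n\le\nu_n-\nu_1<\nu_n$, which holds since $n>1$---guarantees that this set meets $\{\pm\nu_k\}_k$ only at $\pm\nu_n$.

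With that in place one gets an \emph{exact} correlation: expanding $\sin(2\pi\nu_k x)\sin(2\pi\nu_n x)$ into cosines and integrating, every term with $k\ne n$ vanishes and
\[
 \int_{-1/2}^{1/2} f(x)\,g(x)\,dx=\tfrac12\,a_n
\]
(for $N=\infty$ the termwise integration is justified by the $L^\infty$ convergence of the series and the fact that $g$ is a trigonometric polynomial). Now apply H\"older's inequality $\|h\|_\infty\ge\big|\int h\psi\big|/\|\psi\|_1$ with $h=f(x)/|x|^\alpha$ and $\psi(x)=|x|^\alpha g(x)$:
\[
 \Big\|\frac{f}{|x|^\alpha}\Big\|_\infty\ \ge\ \frac{|a_n|}{2\displaystyle\int_{-1/2}^{1/2}|x|^\alpha K(x)\,\big|\sin(2\pi\nu_n x)\big|\,dx}\,.
\]
The theorem thus reduces to the single inequality
\[
 \int_{-1/2}^{1/2}|x|^\alpha K(x)\,\big|\sin(2\pi\nu_n x)\big|\,dx\ <\ \frac{2}{\mathcal{B}_{n,\alpha}}\,,
\]
because this forces $\|f/|x|^\alpha\|_\infty>\tfrac14\mathcal{B}_{n,\alpha}|a_n|$, and an essential supremum strictly larger than a constant is exceeded on a set of positive measure---which is precisely the assertion.

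What remains, and where the bulk of the computation lives, is the displayed integral bound; this is also where the curious shape of $\mathcal{B}_{n,\alpha}$ is forced. From $\sin(\pi x)\ge 2x$ on $\big[0,\tfrac12\big]$ and $|\sin(\pi\Lambda_n x)|\le\min\big(1,\Lambda_n|\sin\pi x|\big)$ one gets $K(x)\le\min\big(\Lambda_n,(4\Lambda_n x^2)^{-1}\big)$; combining this with $|\sin(2\pi\nu_n x)|\le\min\big(1,2\pi\nu_n|x|\big)$, the integrand is dominated by an explicit power of $|x|$ on each of the three ranges $|x|\le1/\nu_n$, $1/\nu_n\le|x|\le1/\Lambda_n$, $1/\Lambda_n\le|x|\le\tfrac12$. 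Adding the three resulting elementary integrals $\int_a^b t^{\beta}\,dt$ and simplifying (using $\nu_n>\Lambda_n$ to discard the $\nu_n$-dependence) yields a bound of the announced form, the endpoints generating the combination $\big(\Lambda_n^{1-\alpha}-(1/5)^{1-\alpha}\big)/(1-\alpha)$ and the factors $\pi^{\alpha-1}$, $(1+\alpha)^{-1}$; the value $\alpha=1$ is then handled by continuity, the power integrals degenerating into logarithms in accordance with $\mathcal{B}_{n,1}=\lim_{\alpha\to1^-}\mathcal{B}_{n,\alpha}$. The genuinely delicate point will be to calibrate the cut-off scales---hence the appearance of $1/5$---so that the estimate stays below $2/\mathcal{B}_{n,\alpha}$ uniformly over all admissible $\alpha\in[0,1]$, $\Lambda_n$ and $\nu_n$, the regime $\alpha\to1^-$ being the tight one; this presumably explains why the safe factor $\tfrac14$, rather than the $\tfrac12$ the argument morally delivers, is what gets recorded.
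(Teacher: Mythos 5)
Your construction is exactly the paper's: the resonator is the Fej\'er kernel of length $\Lambda_n$ modulated to frequency $\nu_n$ (the paper takes $e(-\nu_n x)\Lambda_n^{-1}\big(\sin(\pi\Lambda_n x)/\sin(\pi x)\big)^2$, of which your $K(x)\sin(2\pi\nu_n x)$ is essentially the imaginary part), the exact correlation $\int Sg=\tfrac12 a_n$ follows from the gap condition together with $\Lambda_n<\nu_n$ (which you correctly trace to $n>1$), and your duality step reduces the theorem to the same inequality as the paper's \eqref{auxgap}, namely $\int_{-1/2}^{1/2}|t|^\alpha|g(t)|\,dt<2/\mathcal{B}_{n,\alpha}$; note in passing that the factor $\tfrac14$ is then exactly $\tfrac12\cdot\tfrac12$ coming from these two steps, not a ``safe factor''. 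The genuine gap is that you stop precisely where the theorem lives: the statement is an explicit inequality with the explicit constant $\mathcal{B}_{n,\alpha}$, and the estimate that produces it --- uniformly in $\alpha\in[0,1]$ and with the curious $(1/5)^{1-\alpha}$ term --- is only asserted (``presumably'', ``will be''), never carried out. In the paper this is the bulk of the proof: the split at $t=(\pi\Lambda_n)^{-1}$, the bounds $\Lambda_n|g|\le\Lambda_n^2$ and $\Lambda_n|g|\le\csc^2(\pi t)\le(\pi t)^{-2}+0.6$, and then a uniformity-in-$\alpha$ argument resting on the nonnegativity on $[0,1]$ of $f(\alpha)=2^{-\alpha}+0.3(\alpha-1)-(1+\alpha)^{-1}(\pi/5)^{1-\alpha}$, which vanishes at $\alpha=1$; none of this is replaceable by hand-waving because the chain of inequalities has essentially no slack at the endpoints.

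Moreover, the specific estimates you sketch do not close as written. Take $\alpha=0$ and a sequence with $\nu_n=\Lambda_n+1$ (allowed, e.g.\ $\nu_1=1$, $\nu_2=\Lambda+1$, $\nu_3=2\Lambda+1$, $n=2$): your first range $[0,1/\nu_n]$ with $K\le\Lambda_n$ and $|\sin(2\pi\nu_n x)|\le 2\pi\nu_n x$ contributes $\pi\Lambda_n/\nu_n$, the middle range contributes $1-\Lambda_n/\nu_n$, and the tail with $K\le(4\Lambda_n x^2)^{-1}$ contributes $\tfrac14-\tfrac1{2\Lambda_n}$, for a total of about $\pi+\tfrac14-\tfrac{2.6}{\Lambda_n}$, which for $\Lambda_n\ge 11$ exceeds the target $1/\mathcal{B}_{n,0}=\pi\big(1-\tfrac{1}{5\Lambda_n}\big)$. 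So ``discarding the $\nu_n$-dependence via $\nu_n>\Lambda_n$'' and the crude $\sin(\pi x)\ge 2x$ tail bound (which loses a factor $\pi^2/4$ against the paper's $(\pi t)^{-2}+0.6$) are too lossy for the stated constant; one must keep the cap $|\sin|\le1$, choose the cut-off near $(\pi\Lambda_n)^{-1}$, and then verify the resulting bound against $\mathcal{B}_{n,\alpha}$ for all $\alpha$ at once --- that calibration (the source of the $1/5$) is the actual content of the proof and is missing from your proposal.
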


As an arithmetic oriented example, we can deduce from this result that the function 
\[
 F(x)=
 \sum_{n=1}^\infty
 \frac{\tau(n)e(p_n^3 x)}{n^2(\log n)^{2023} },
\]
with $p_n$ the $n$-th prime and $\tau$ the divisor function, does not satisfy the Lipschitz condition at any point. In particular, it is nowhere differentiable.

To see this, consider 
$f(x)=F(x_0+x)-F(x_0-x)$
and note that 
$e\big(a(x_0+x)\big)-e\big(a(x_0-x)\big)$
equals
$2i \, e(ax_0)\sin (2\pi ax)$. 
If $F$ is Lipschitz at $x_0$ then $|f(x)|\le C|x|$
for some constant $C$. When we apply Theorem~\ref{gaps} to $f$ with $\alpha=1$ and $\nu_n=p_n^3$, we have 
$\Lambda_n\ge p_n^2\gg n^2(\log n)^2$ and $\mathcal{B}_{n,1}\gg n^2\log n$. 
Recalling that $\limsup \tau(n)/(\log n)^K=\infty$
\cite[Th.\,314]{HaWr}, the result shows that $|f(x)/x|$ is unbounded.

\

If $F:[1,\infty)\longrightarrow\C$ and $f$ is a bounded arithmetic function, the following kind of convolution is closer to the original formulation of M\"obius inversion and to some natural applications in combinatorics:
\begin{equation}\label{mconv}
 G(x)=
 \sum_{n\le x} f(n)F\Big(\frac xn\Big).
\end{equation}
Let us consider the case in which $F$ is a partial sum of a Fourier series except for introducing a smooth transition from $F(1)$ to $0$ in order to freely extend the upper bound  in \eqref{mconv}. Namely, 
\[
 F(x)=\phi(x)\sum_{n=1}^N a_n e(nx)
\]
with $\phi\in C_0^\infty$ such that $\phi(x)=0$ for $x<1$ and $\phi(x)=1$ for $x>2$.

\begin{theorem}\label{thconv}
 Let $R\ge 4N$ and 
 \[
  \mathcal{B}=\sum_{n=1}^N b_n
  \qquad\text{with}\quad 
  b_n=a_n\sum_{d\mid n} f(d).
 \]
 Assume $b_n\ge 0$ and for some $K>1$ and any $V>1/6$
 \[
  \sum_{\frac R{2V}\le d\le 3R}
  \sum_{\frac{dV}{R}\le r<\frac{2dV}{R}}
  \sum_{\substack{n=1\\ n\equiv \pm r\ (d)}}^N
  |a_n|
  =o\big(
  (6V)^K\mathcal{B}\big). 
 \]
 Then 
 \[
  \int_R^{2R}
  \big|G(x)\big|^2\, dx 
  \gg R\mathcal{B}^2N^{-1}.
 \]
 In particular, $G(x)=\Omega\big(\mathcal{B}N^{-1/2}\big)$.
\end{theorem}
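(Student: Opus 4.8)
The plan is to apply the correlation scheme of the Introduction with the short exponential sum $g(x)=\sum_{j=1}^{N}e(-jx)$ as resonator. Since $\phi(y)=0$ for $y<1$, we may write
\[
 G(x)=\sum_{\nu\ge 1}f(\nu)\,\phi(x/\nu)\sum_{m=1}^{N}a_m\,e(mx/\nu),
\]
only the finitely many $\nu<x$ contributing. Fix $w\in C^\infty(\R)$ with $0\le w\le1$, $\operatorname{supp}w\subset(1,2)$, $w\not\equiv0$, and set $c=\int_1^2 w>0$. As $w(x/R)\le\mathbf 1_{[R,2R]}(x)$, it suffices to bound $\int_{\R}w(x/R)|G(x)|^2dx$ below, and by Cauchy--Schwarz this is $\ge|\mathcal I|^2/\mathcal J$ with $\mathcal I=\int_{\R}w(x/R)G(x)g(x)\,dx$ and $\mathcal J=\int_{\R}w(x/R)|g(x)|^2dx$. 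Expanding $|g|^2$ and using the rapid decay of $\widehat w$ together with $R\ge4N$ gives $\mathcal J=cNR+O_A(NR^{1-A})\ll NR$, so the whole matter reduces to showing $|\mathcal I|\gg R\mathcal B$ (we may assume $\mathcal B>0$, the case $\mathcal B=0$ being trivial).

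Interchanging the finite sums,
\[
 \mathcal I=\sum_{\nu}f(\nu)\sum_{m=1}^N a_m\sum_{j=1}^N\int_{\R}w(x/R)\,\phi(x/\nu)\,e\bigl((m/\nu-j)x\bigr)\,dx .
\]
The diagonal $m=j\nu$ forces $\nu=m/j\le N\le R/4$, whence $x/\nu\ge4>2$ on $\operatorname{supp}w(\cdot/R)$, so $\phi(x/\nu)\equiv1$ there and the integral equals $\int w(x/R)\,dx=cR$; reindexing by $m$ (so $\nu\mid m$, $j=m/\nu$) the diagonal contributes $cR\sum_{m\le N}a_m\sum_{d\mid m}f(d)=cR\sum_{m\le N}b_m=cR\mathcal B$, which is $\ge0$ since $b_m\ge0$. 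It thus remains to prove that the off-diagonal part $\mathcal E$ (the terms with $m\ne j\nu$) satisfies $\mathcal E=o(R\mathcal B)$; this is the heart of the argument.

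Substituting $x=Ry$, an off-diagonal integral becomes $R\int w(y)\,\phi(Ry/\nu)\,e\bigl((m-j\nu)(R/\nu)y\bigr)dy$. The function $y\mapsto w(y)\phi(Ry/\nu)$ lies in $C_0^\infty$ with all $C^k$-norms bounded uniformly in $\nu<2R$ (if $\nu\le R/2$ then $\phi(Ry/\nu)\equiv1$ on $\operatorname{supp}w$, while if $R/2<\nu<2R$ then $R/\nu<2$), so its Fourier transform is $\ll_A(1+|\xi|)^{-A}$; hence each off-diagonal integral is $\ll_A R(1+V_{\nu,m,j})^{-A}$, where $V_{\nu,m,j}:=R|m-j\nu|/\nu$. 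A direct check using $R\ge4N$ gives $V_{\nu,m,j}>1$ for every off-diagonal triple (for $\nu<R$ since $V_{\nu,m,j}\ge R/\nu>1$, and for $R\le\nu<2R$ since then $|m-j\nu|\ge\nu-N\ge 3R/4$, so $V_{\nu,m,j}\ge3R/8>1$). Now decompose dyadically: for $k\ge1$ put $V_k=2^{k-1}>1/6$ and group the triples with $V_k\le V_{\nu,m,j}<2V_k$. These are exactly the triples with $\nu\in(R/(2V_k),2R)$ and $|m-j\nu|=r$ for some integer $r\in[\nu V_k/R,\,2\nu V_k/R)$; for fixed $(\nu,r)$ there are at most two such $j$, and the condition on $m$ is $m\equiv\pm r\ (\nu)$. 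Therefore, by the hypothesis with $V=V_k$,
\[
 \sum_{\substack{(\nu,m,j)\ \text{off-diagonal}\\ V_k\le V_{\nu,m,j}<2V_k}}|a_m|\ \le\ 2\sum_{R/(2V_k)\le\nu\le3R}\ \sum_{\nu V_k/R\le r<2\nu V_k/R}\ \sum_{\substack{m\le N\\ m\equiv\pm r\,(\nu)}}|a_m|\ =\ o\bigl((6V_k)^K\mathcal B\bigr),
\]
and, since $|f|\ll1$ and $(1+V)^{-A}\ll_A V_k^{-A}$ on each block,
\[
 |\mathcal E|\ \ll_A\ R\sum_{k\ge1}V_k^{-A}\,o\bigl((6V_k)^K\mathcal B\bigr)\ =\ o\Bigl(R\mathcal B\sum_{k\ge1}6^K V_k^{\,K-A}\Bigr).
\]
Choosing $A>K$ makes the geometric series converge, so $\mathcal E=o(R\mathcal B)$.

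Combining the pieces, $\int_R^{2R}|G|^2\ge\int w(x/R)|G|^2\ge|\mathcal I|^2/\mathcal J=|cR\mathcal B+\mathcal E|^2/\mathcal J\gg (R\mathcal B)^2/(NR)=R\mathcal B^2N^{-1}$ for $R$ large. Finally $R\sup_{[R,2R]}|G|^2\ge\int_R^{2R}|G|^2\gg R\mathcal B^2N^{-1}$, so some $x\in[R,2R]$ satisfies $|G(x)|\gg\mathcal B N^{-1/2}$, which is the claimed $\Omega$-result. The main obstacle is the off-diagonal estimate: producing the Fourier decay of $w(y)\phi(Ry/\nu)$ with constants uniform in $\nu$ over the whole range $\nu<2R$, confirming the clean gap $V>1$ that isolates the diagonal term, and aligning the dyadic blocks in $V$ with the triple sum in the hypothesis so that the growth $(6V)^{K}$ (with $K>1$) is absorbed by the decay $(1+V)^{-A}$.
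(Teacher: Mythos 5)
Your proposal is correct and follows essentially the same route as the paper: the same Dirichlet-kernel resonator $g$, Cauchy--Schwarz against a smooth weight, the diagonal $m=j\nu$ producing $\mathcal{B}$ via $R\ge 4N$ (so that $\phi\equiv1$ there), and the same dyadic decomposition in $V=R|m-j\nu|/\nu$ matched to the hypothesis, with the $(6V)^K$ growth absorbed by the Fourier decay. The only differences are cosmetic (a weight supported in $(1,2)$ rather than the paper's $\psi\in C_0^\infty((1/2,5/2))$, and a slightly more explicit treatment of the uniformity in $\nu$ and of the factor $2$ from $n\equiv\pm r\ (d)$).
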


The hypothesis $b_n\ge 0$ is satisfied for instance when $a_n\ge 0$ and $f$ is real character. 
In some sense, the main result in \cite{ChCrUb}, which we refine here, relies on an anharmonic version of this with $a_n\ge 0$ and $f(d)=\mu(\sqrt{d})$ if $d$ is square and $0$ otherwise.

\section{Proofs}

The proof of Theorem~\ref{visible} is based on showing that the asymptotics in \cite[Th.\,1.1]{ChCrUb} is preserved, except for a $\sigma/2$ factor, when replacing $g$ there by the shorter resonator
\[
 g_\sigma(x)
 =
 \sum_{n\le R^\sigma}
 \frac{\cos(2\pi x\sqrt{n})}{\sqrt{n}}
 \qquad\text{where}\quad 0<\sigma<2.
\]
We employ the same notation as in that work introducing 
\[
 I(R)
 =
 \int 
 g_\sigma(t)E^*(t)\, d\nu(t)
\]
where $d\nu(x)=R^{-1}\psi(x/R)$ is a probability measure with $\psi\in C_0^\infty\big((1,2)\big)$ and our goal is

\begin{theorem}\label{shorter}
 Given $0<\sigma<2$, for $R\to\infty$
 \[
  I(R)
  \sim 
  -\sigma C R\log R
 \]
 where
 \[
  C=C_0\int t\psi(t)\, dt
  \quad\text{and}\quad 
  C_0
  =
  \frac 78 
  \prod_{p>2}
  \Big(1-\frac 1p\Big)\Big(1+\frac 1p-\frac{1}{p^2}\Big).
 \]
\end{theorem}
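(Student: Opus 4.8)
The plan is to follow the computation in \cite[Th.\,1.1]{ChCrUb} but track the effect of truncating the resonator at $R^\sigma$ instead of the original length (presumably $R^2$). First I would substitute the truncated Voronoï-type expansion for $E^*(t)$ into $I(R)$. Recall that $E^*(R)$, the error in counting visible lattice points in a ball of radius $R$, admits an expansion obtained by sieving the ordinary lattice-point error $E(t)$ for spheres through $\sum_{d^2\mid n}\mu(d)$; writing $\mathcal{N}^*(R)=\sum_{d}\mu(d)\mathcal{N}(R/d)$ (counting lattice points in the ball of radius $R/d$) produces a main term together with $\sum_d \mu(d)E(R/d)$, and each $E(R/d)$ has a classical Hardy--Landau expansion in terms of $\sum_{m}r_3(m)\,m^{-3/4}\cos(2\pi\sqrt m\,R/d - \text{phase})/\sqrt{R/d}$ or, more conveniently here, in the smoothed integral form already set up in \cite{ChCrUb}. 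Plugging this into $I(R)=\int g_\sigma(t)E^*(t)\,d\nu(t)$ and expanding $g_\sigma(t)=\sum_{n\le R^\sigma}\cos(2\pi t\sqrt n)/\sqrt n$, the integral becomes a double sum over $n$ (from the resonator) and over the frequencies $\sqrt m$ coming from $E^*$, weighted by $d$-sums with $\mu(d)$.

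The key step is the diagonal/off-diagonal split. The smooth weight $d\nu$ localizes $t\asymp R$, so $\int \cos(2\pi t\sqrt n)\cos(2\pi t\sqrt m /d)\,d\nu(t)$ is negligible unless $\sqrt n$ and $\sqrt m/d$ are within $O(1/R)$ of each other; since $n,m$ are integers this forces $m=d^2 n$ (the resonator was built precisely so that its frequencies $\sqrt n$ resonate with the frequencies $\sqrt{d^2 n}/d$ of $E^*$). The surviving diagonal contribution is then, up to lower-order terms and the phase/normalization constants that produce $C_0$, a constant times $R$ times
\[
 \sum_{n\le R^\sigma}\frac 1n\sum_{d}\frac{\mu(d)\,(\text{arithmetic factor at }d^2n)}{(\cdots)},
\]
and the inner $d$-sum contributes the Euler product giving $C_0$ while the outer sum $\sum_{n\le R^\sigma}1/n \sim \sigma\log R$. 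This is where the factor $\sigma$ appears: the original resonator of length $R^2$ gave $\sum_{n\le R^2}1/n\sim 2\log R$, so shortening to $R^\sigma$ replaces $2\log R$ by $\sigma\log R$, i.e. multiplies the asymptotic constant by $\sigma/2$, exactly as announced.

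The main obstacle is controlling the off-diagonal terms after truncation: one must show that $\sum_{n\le R^\sigma}\sum_{m\ne d^2 n}\sum_d \cdots$ is $o(R\log R)$ uniformly, and the savings from stationary phase / repeated integration by parts in $\int e(\pm(\sqrt n\pm\sqrt m/d)t)\,d\nu(t)$ must beat the length of the $n$-summation, which is now $R^\sigma$ with $\sigma$ possibly close to $2$. One also has to handle the contribution of the $m$'s close to but not equal to $d^2 n$ (the near-diagonal), and verify that the tail $m\gg R^{2+\epsilon}$ of the Hardy--Landau expansion, as well as the error in truncating that expansion at a finite length, remains negligible — here the constraint $\sigma<2$ is what keeps the resonator frequencies below the natural cutoff of the $E^*$-expansion so that the resonance set $\{m=d^2n\}$ actually lies in the effective range. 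Since the case $\sigma=2$ (or close to it) was essentially the content of \cite{ChCrUb}, I expect these estimates to be available there with only cosmetic changes; the bulk of the work is bookkeeping to confirm that every error bound in that paper still closes with the shorter sum, and that no error term of size $R\log^{?}R$ was being absorbed into the main term in a way that breaks when $\sigma$ is small.
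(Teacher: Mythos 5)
Your overall architecture (substitute \eqref{all2vib} and the smoothed Voronoi expansion, split into the resonant terms $n=md^2$ and the rest, borrow the off-diagonal bounds from \cite{ChCrUb}) matches the paper, and your treatment of the off-diagonal is essentially right for the wrong reason of emphasis: those estimates in \cite{ChCrUb} are in absolute value, so with $R^\sigma=o(M^2)$ there are simply fewer terms and the bound $O(R(\log R)^{5/6})$ transfers with no work at all. That part is not the obstacle.

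The genuine gap is in the diagonal, which you dismiss as a rescaling: ``the inner $d$-sum contributes the Euler product, the outer sum gives $\sigma\log R$, so the old constant gets multiplied by $\sigma/2$.'' This is exactly the step that does \emph{not} carry over. In \cite{ChCrUb} the resonator length and the effective Voronoi range were balanced, so for each $n$ the full divisor condition $d^2\mid n$ was available and the diagonal collapsed via $\sum_{d^2\mid n}\mu(d)=\mu^2(n)\ge 0$. With the shortened resonator the ranges are unbalanced: the constraint $m=n/d^2\le R^\sigma$ forbids small $d$ when $n$ is large, so the exact M\"obius identity cannot be applied and your inner $d$-sum cannot be evaluated independently of $n$. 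The paper has to take a different route: fix $d$, bound the contribution of $m\le d^2$ via the Hecke-type inequality $r_3(md^2)\le r_3(m)\prod_{p\mid d}(p+2)$, evaluate the average $\sum_{m\le N}r_3(md^2)=v(d)\big(\tfrac{4\pi}{3}(N/d^2)^{3/2}+O(N/d^2)\big)$ by counting lattice points of $(d^2\Z)^3$, apply Abel summation against $f_d$, and only then does the $d$-sum produce $\sum_d\mu(d)v(d)/d^6=\prod_p\big(1-v(p)/p^6\big)=C_0$, where $v(p)=p^2(p^2+p-1)$ must itself be computed (Lemma~\ref{v_eval}, via Gauss sums). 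None of this is sketched or replaceable by ``cosmetic changes''; in particular your claim that the constant is the one from \cite{ChCrUb} times $\sigma/2$ would even inherit the incorrect constant $7/\pi^2$ of that paper, which the present proof corrects to $C_0$ precisely through the evaluation of $v(d)$. So the proposal identifies the right skeleton but leaves the actual analytic core of the theorem unproved.
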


As a matter of fact, in the statement of the main result of \cite{ChCrUb}, formally corresponding to $\sigma\to 2^{-}$, the constant $C_0$ was wrongly substituted by $7/\pi^2$
because $v$ as in the lemma below was not correctly evaluated.
We apologize for the inconvenience. Both constants differ in less than $4\%$ and it does not affect to the $\Omega$-result which was the main interest.

\begin{lemma}\label{v_eval}
 Let $v(d)$ be the number of solutions of $x^2+y^2+z^2\equiv 0\pmod{d^2}$. For $d$ odd squarefree
 \[
  v(2d)=8v(d)
  \qquad\text{and}\qquad 
  v(d)
  =
  \prod_{p\mid d}
  p^2(p^2+p-1).
 \]
 In particular $v(d)=O\big(d^4\log\log d\big)$.
\end{lemma}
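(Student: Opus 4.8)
The plan is to establish multiplicativity of $v$ and then compute the prime-power contribution. First I would observe that $v(d)$ counts solutions of a system of congruences modulo $d^2$, so by the Chinese Remainder Theorem $v$ is multiplicative: if $d=d_1d_2$ with $(d_1,d_2)=1$ then $d^2=d_1^2d_2^2$ with $(d_1^2,d_2^2)=1$, and solutions modulo $d^2$ correspond bijectively to pairs of solutions modulo $d_1^2$ and $d_2^2$. Hence it suffices to evaluate $v(p)$, the number of solutions of $x^2+y^2+z^2\equiv 0\pmod{p^2}$, separately for odd primes $p$ and for $p=2$, since $d$ is squarefree and we only need $v$ at primes and at $2$.

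For an odd prime $p$, I would count solutions of $x^2+y^2+z^2\equiv 0\pmod{p^2}$ directly. One clean route: use Gauss sums / the standard count of $N_q(a)$, the number of $(x,y,z)\in(\Z/q\Z)^3$ with $x^2+y^2+z^2\equiv a\pmod q$; for $q=p^2$ one has the well-known formula $N_{p^2}(0)=p^4+p^3-p^2$ for odd $p$ (the ternary quadratic form in odd characteristic), which is exactly $p^2(p^2+p-1)$. Alternatively, stratify by $p\mid\gcd(x,y,z)$ or not: if $p\mid x,y,z$ there are $p^3$ such triples (namely $p$ choices each for $x/p,y/p,z/p$ mod $p^2$, wait---$x\equiv 0 \pmod p$ gives $p$ residues mod $p^2$, so $p^3$ triples), all solutions; if $p\nmid\gcd$, lift solutions mod $p$ to mod $p^2$ by Hensel (the gradient of $x^2+y^2+z^2$ is nonzero at such points), using that the number of solutions mod $p$ of $x^2+y^2+z^2\equiv 0$ is $p^2$ (isotropic ternary form over $\mathbb{F}_p$) with the origin removed, i.e.\ $p^2-1$ nonzero solutions, each lifting in $p^2$ ways---this bookkeeping recovers $p^3+(p^2-1)p^2=p^4+p^3-p^2$. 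For $p=2$ the analogous direct count over $\Z/4\Z$ (only $64$ triples, squares are $0$ or $1$ mod $4$) should give $v(2)=8$; combined with multiplicativity this yields $v(2d)=v(2)v(d)=8v(d)$ for odd squarefree $d$, and the product formula follows for all odd squarefree $d$ and their doubles.

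Finally, for the $O(d^4\log\log d)$ bound: from the formula, $v(d)/d^4=\prod_{p\mid d}(1+1/p-1/p^2)\le\prod_{p\mid d}(1+1/p)$, and $\sum_{p\mid d}1/p\ll\sum_{p\le\log d}1/p\ll\log\log\log d$ once one notes that if $d$ has $k$ distinct prime factors then $d\ge p_1\cdots p_k$ grows at least like $e^{(1+o(1))p_k}$, forcing the primes dividing $d$ to be $\le(1+o(1))\log d$; hence $\prod_{p\mid d}(1+1/p)\ll\exp(\sum_{p\le\log d}1/p)\ll\log\log d$ by Mertens. The main obstacle is none of these steps individually but rather pinning down the correct constant in the local count at odd $p$ and at $2$---indeed the authors note the original paper mis-evaluated $v$---so the care should go into the direct Hensel-lifting count, making sure the singular locus $p\mid\gcd(x,y,z)$ is handled exactly and that the $p=2$ case is checked by hand rather than by a formula valid only for odd $p$.
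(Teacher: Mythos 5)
Your proposal is correct, but the key computation is done by a genuinely different route than the paper's. For odd $p$ you stratify the solutions of $x^2+y^2+z^2\equiv 0\pmod{p^2}$ according to whether $p\mid\gcd(x,y,z)$ (giving $p^3$ automatic solutions) or not, and lift the $p^2-1$ nonzero solutions mod $p$ of the nondegenerate ternary form (standard count $p^{n-1}=p^2$ including the origin) by Hensel, each in $p^2$ ways since the gradient $2(x,y,z)$ is nonzero mod an odd prime; this gives $p^3+(p^2-1)p^2=p^2(p^2+p-1)$, which matches. The paper instead writes $p^2v(p)=\sum_{a=1}^{p^2}\bigl(\sum_{n=1}^{p^2}e(an^2/p^2)\bigr)^3$ and evaluates the inner quadratic Gauss sums case by case ($a=p^2$, $p\nmid a$, $a=kp$), the Legendre-symbol terms summing to zero. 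Your lifting argument is more elementary and makes transparent exactly where $p$ odd is used, while the paper's is a short computation once the classical Gauss sum evaluation is quoted; the treatment of $p=2$ (squares are $0,1$ mod $4$, so all coordinates must be even, $v(2)=8$) and the use of CRT multiplicativity are the same in both. One small caveat in your final step: the claim that the primes dividing $d$ are all $\le(1+o(1))\log d$ is false (take $d$ a large prime); the correct way to get $\prod_{p\mid d}(1+1/p)\ll\log\log d$ is to note that the primes dividing $d$ that exceed $\log d$ number at most $O(\log d/\log\log d)$ and so contribute a bounded factor, while those $\le\log d$ are handled by Mertens --- or simply cite the standard bound as the paper does via Hardy--Wright, \S 18.3.
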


\begin{proof}
 By the Chinese remainder theorem, $v$ is multiplicative. Since $n^2\equiv 0,1\pmod{4}$ for every $n$, depending on its parity, $x^2+y^2+z^2\equiv 0\pmod{4}$
 implies $x\equiv y\equiv z\equiv 0\pmod{2}$
 showing $v(2)=2^3$. 
 We have to prove $v(p)=p^2(p^2+p-1)$ for primes $p>2$. Expanding the cube and changing the order of summation, the following exponential sum representation is obtained
 \[
  p^2 v(p)
  =
  \sum_{a=1}^{p^2}
  \bigg(
  \sum_{n=1}^{p^2}
  e\Big(\frac{an^2}{p^2}\Big)
  \bigg)^3.
 \]
 The innermost sum is $p^2$ if $a=p^2$. The classical evaluation of quadratic Gauss sums \cite[(3.38)]{IwKo}
 shows that it is $p$ if $p\nmid a$ and $\Big(\frac kp\Big)c_p$ for certain $|c_p|=\sqrt{p}$ if $a=kp$ with $1\le k<p$. Collecting these contributions, 
 \[
  p^2v(p)
  =
  (p^2)^3+(p^2-p)p^3
  +c_p^3\sum_{k=1}^{p-1}\Big(\frac kp\Big).
 \]
 The sum is null, giving $v(p)=p^2(p^2+p-1)$ as expected. 
 
 The last claim comes from $v(d)<d^4\prod_{p\mid d}(1+p^{-1})$ \cite[\S18.3]{HaWr}. 
\end{proof}

The motivation to take a shorter sum is to control higher moments of~$g_\sigma$. It requires some arithmetic considerations about linear combinations of square roots. 

\begin{lemma}\label{moments}
 For each $k\in\Z^+$ and $0<\sigma<2/\big(2^{2k-1}-1\big)$ there exists a positive constant $C_{k,\sigma}$ such that 
 \[
  \int 
  \Big|
  \sum_{n\le R^\sigma}
  \frac{e(x\sqrt{n})}{\sqrt{n}}
  \Big|^{2k}\, d\nu 
  \sim 
  C_{k,\sigma} (\log R)^k.
 \]
 In particular, $\int |g_\sigma|^{2k}\, d\nu\ll (\log R)^k$.
\end{lemma}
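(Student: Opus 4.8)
The plan is to expand the $2k$-th power of the sum and reduce everything to counting solutions of a system of square-root equations, then carry out a diagonal/off-diagonal analysis. Writing $S(x)=\sum_{n\le R^\sigma} n^{-1/2}e(x\sqrt n)$, expanding $|S|^{2k}$ produces
\[
 \int |S|^{2k}\, d\nu
 =
 \sum_{n_1,\dots,n_k\le R^\sigma}\ \sum_{m_1,\dots,m_k\le R^\sigma}
 \frac{1}{\sqrt{n_1\cdots n_k m_1\cdots m_k}}\,
 \widehat{\nu}\Big(\sqrt{n_1}+\cdots+\sqrt{n_k}-\sqrt{m_1}-\cdots-\sqrt{m_k}\Big),
\]
where $\widehat{\nu}$ is rapidly decaying since $\psi\in C_0^\infty$. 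The main term should come from the \emph{diagonal}, i.e. tuples where $(n_1,\dots,n_k)$ is a permutation of $(m_1,\dots,m_k)$; there the phase vanishes and $\widehat\nu(0)=1$, and the contribution is
\[
 k!\Big(\sum_{n\le R^\sigma}\frac1n\Big)^k + \text{lower order}
 = k!\,(\sigma\log R)^k + O\big((\log R)^{k-1}\big),
\]
which already identifies $C_{k,\sigma}=k!\,\sigma^k$ up to showing the off-diagonal is negligible. (One must be slightly careful subtracting the overcounting from near-diagonal coincidences, but that only affects lower-order terms.)

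The heart of the matter, and the reason for the restriction $\sigma<2/(2^{2k-1}-1)$, is bounding the off-diagonal: tuples with $\sqrt{n_1}+\cdots+\sqrt{n_k}\ne\sqrt{m_1}+\cdots+\sqrt{m_k}$ as real numbers. Here I would invoke the arithmetic of linear combinations of square roots: a nonzero quantity of the form $\sum\sqrt{n_i}-\sum\sqrt{m_j}$ with all entries $\le R^\sigma$ cannot be too small. Concretely, one rationalizes by multiplying through all $2^{2k}$ sign-conjugates; the product is a nonzero integer, hence $\ge 1$ in absolute value, and each conjugate factor is $O(\sqrt{R^\sigma})$, giving a lower bound of the shape
\[
 \Big|\sum_i\sqrt{n_i}-\sum_j\sqrt{m_j}\Big|
 \gg R^{-\sigma(2^{2k-1}-1)/2}.
\]
Combined with the rapid decay $\widehat\nu(\xi)\ll_A (1+R|\xi|)^{-A}$ (the factor $R$ because $d\nu$ lives at scale $R$), every genuinely off-diagonal term contributes $\ll_A (R\cdot R^{-\sigma(2^{2k-1}-1)/2})^{-A} = R^{-A(1-\sigma(2^{2k-1}-1)/2)}$, which is a negative power of $R$ precisely when $\sigma<2/(2^{2k-1}-1)$; summing over the $\ll R^{2k\sigma}$ tuples and the harmonic weights still leaves a power saving, so the off-diagonal is $o(1)$, in fact $o\big((\log R)^k\big)$ with huge room to spare.

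The main obstacle is the separation estimate for sums of square roots and, relatedly, pinning down that the only coincidences $\sum\sqrt{n_i}=\sum\sqrt{m_j}$ are the ``obvious'' ones — namely that $\{n_i\}$ and $\{m_j\}$ have the same multiset of squarefree kernels with matching partial sums, which by a theorem on linear independence of $\sqrt{\text{squarefrees}}$ over $\Q$ forces a permutation-type structure. I would handle this by grouping the $n_i$ and $m_j$ according to their squarefree part, reducing an equality of nested radicals to a finite system of rational equations within each group; the resulting ``generalized diagonal'' is larger than the strict diagonal but contributes the same leading constant $k!\,\sigma^k$ after a short combinatorial count, the discrepancy being $O((\log R)^{k-1})$. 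Once the diagonal main term and the off-diagonal power-saving are in hand, the final claim $\int|g_\sigma|^{2k}\,d\nu\ll(\log R)^k$ follows since $g_\sigma=\tfrac12(S+\overline S)$ and $|g_\sigma|^{2k}\le |S|^{2k}$ termwise after expansion, or simply by the triangle inequality $\|g_\sigma\|_{2k}\le\|S\|_{2k}$.
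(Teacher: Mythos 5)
Your outline follows the same route as the paper: open the $2k$-th power to get $\sum \widehat{\psi}\big(RL(\vec{n})\big)/\sqrt{n_1\cdots n_{2k}}$ with $L(\vec{n})=\sum_j(\sqrt{n_j}-\sqrt{n_{j+k}})$, kill the off-diagonal by a separation bound for nonzero sums of square roots, and treat the exact cancellations $L(\vec{n})=0$ via Besicovitch's theorem after grouping coordinates by squarefree kernel. Two points, however, are not yet proofs. First, the separation estimate: multiplying over \emph{all} $2^{2k}$ sign patterns gives an integer bounded below by $1$ but only yields $|L(\vec{n})|\gg R^{-\sigma(2^{2k}-1)/2}$, which does not cover the full stated range $\sigma<2/(2^{2k-1}-1)$; to get the exponent $(2^{2k-1}-1)\sigma/2$ you must take the product over the $2^{2k-1}$ sign patterns modulo the global sign flip (that half-product is still Galois-stable, hence an integer), and you must also deal with the possibility that some \emph{other} conjugate vanishes, in which case the product is $0$ and the argument gives nothing directly. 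The paper sidesteps all of this by quoting Zhai's Lemma 2.2, which is exactly this statement.

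Second, and more substantively, the phrase ``after a short combinatorial count, the discrepancy being $O((\log R)^{k-1})$'' hides the only genuinely delicate estimate in the lemma. After writing $n_j=s_j^2m_i$ and grouping into $\ell<k$ classes $A_i,B_i$ with constraints $\sum_{j\in A_i}s_j=\sum_{j\in B_i}s_j$, the available weight is only about $1/(m_1\cdots m_\ell)\cdot 1/(s_1\cdots s_{2k})$, so an unconstrained count of the $s$-variables produces an extra $(\log R)^{2k}$, vastly exceeding the claimed $(\log R)^{k-1}$. One must actually exploit the linear constraints: in the paper this is done by letting $u$ be the common value of the two sums in the group containing the largest $s_j$, noting that both that group's $A$-side and $B$-side then contain a variable of size $\gg u/k$, which yields a factor $u^{-2}$ and the convergent bound $\sum_u u^{-2}(\log u)^{2k-2}=O(1)$; only then does the generalized diagonal contribute $\ll(\log R)^{\ell}\le(\log R)^{k-1}$. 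Your structural setup (same squarefree kernels, matching sums of the square parts) is exactly right, so the gap is fillable, but as written the crucial $O(1)$ bound for the constrained $s$-sum is asserted rather than proved. The remaining ingredients (the rapid decay $\widehat{\psi}(R\xi)$ against $|L|\gg R^{-\delta}$ with $\delta<1$, the identification $|g_\sigma|\le|S|$, and the leading constant from the permutation diagonal) are fine.
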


\begin{proof}
 For $\vec{n}\in\Z_{>0}^{2k}$ let 
 $L(\vec{n})
 =
 \sum_{j=1}^k
 \big(\sqrt{n_j}-\sqrt{n_{j+k}}\big)$.
 Opening the power, 
 \[
  \int 
  \Big|
  \sum_{n\le R^\sigma}
  \frac{e(x\sqrt{n})}{\sqrt{n}}
  \Big|^{2k}\, d\nu 
  =
  \sum_{\|\vec{n}\|_\infty \le R^\sigma}
  \frac{\widehat{\psi}\big(RL(\vec{n})\big)}{\sqrt{n_1n_2\cdots n_{2k}}}.
 \]
 By \cite[Lemma\, 2.2]{zhai2}, if $L(\vec{n})\ne 0$ then 
 $\big|L(\vec{n})\big|\gg R^{-\delta}$ with 
 $\delta= \big(2^{2k-1}-1\big)\sigma/2<1$ and
 $\widehat{\psi}\big(RL(\vec{n})\big)\ll R^{-K}$ for any $K>0$, giving a negligible contribution. 
 Hence we have to show 
 \begin{equation}\label{auxm}
  \sum_{\substack{\|\vec{n}\|_\infty \le R^\sigma\\ L(\vec{n})= 0}}
  \frac{1}{\sqrt{n_1n_2\cdots n_{2k}}}
  \sim 
  C_{k,\sigma} (\log R)^k.
 \end{equation}
 It is clear that the terms with 
 $\{n_1,\dots,n_k\}=\{n_{k+1},\dots,n_{2k}\}$
 give the expected asymptotics. We are going to check that the rest of the sum in \eqref{auxm}  is 
 $O\big((\log R)^{k-1}\big)$. 
 
 Any $n\in\Z^+$ can be decomposed uniquely as $n=s^2m$ with $m$ squarefree and $s\in\Z^+$. 
 A well known result due to Besicovitch states that the square roots of squarefree numbers are linearly independent over $\Q$.
 Hence, for each $\vec{n}$ with $L(\vec{n})= 0$ there are partitions
 \[
  \bigcup_{i=1}^\ell A_i
  =\{1,2,\dots,k\}
  \qquad\text{and}\qquad
  \bigcup_{i=1}^\ell B_i
  =\{k+1,k+2,\dots,2k\}
 \]
 selecting the coordinates with the same squarefree part, which must cancel the squared parts. In formulas, 
 $n_j=s_j^2m_i$ for every $j\in A_i\cup B_i$ with $m_i$ distinct
 and $\sum_{j\in A_i}s_j=\sum_{j\in B_i}s_j$.
 
 The case $\ell=k$ corresponds to $\#A_i=\#B_i=1$, hence 
 $A_i=\{\tau(i)\}$, $B_i=\{k+\lambda(i)\}$
 for some permutations $\tau$ and $\lambda$ of $\{1,2,\dots,k\}$.
 In particular, $n_{\tau(i)}=s_{\tau(i)}^2m_i=n_{k+\lambda(i)}$ and 
 $\{n_1,\dots,n_k\}=\{n_{k+1},\dots,n_{2k}\}$.
 Consequently, if these sets are not equal then $\ell <k$ and the contribution to the sum \eqref{auxm} is bounded by 
 \begin{equation}\label{auxm2}
  \sum_{\ell=1}^{k-1}
  \sum_{\{A_i\}_{i=1}^\ell}
  \sum_{\{B_i\}_{i=1}^\ell}
  \mathop{\sum\ \sum\ \cdots\  \sum}_{m_1<m_2<\cdots <m_\ell\le R^\sigma} 
  \frac{1}{m_1m_2\cdots m_\ell}
  \sum 
  \frac{1}{s_1s_2\cdots s_{2k}}
 \end{equation}
 where 
 in the inner sum we have the restrictions 
 $s_j\le \sqrt{R^\sigma/m_i}$ for $j\in A_i\cup B_i$
 and
 $\sum_{j\in A_i}s_j=\sum_{j\in B_i}s_j$. 
 Let us see that this inner sum is bounded. Say that the largest $s_j$ is $s_{j_1}$ with $j_1\in A_{i_1}$ (the case $j_1\in B_{i_1}$ is symmetric). 
 Then $u= \sum_{j\in A_{i_1}}s_j=\sum_{j\in B_{i_1}}s_j<ks_{j_1}$
 and the same holds replacing $s_{j_1}$ by the greatest $s_j$ with $j\in B_{i_1}$. 
 Obviously, any other variable is at most $u$ and we have 
 \[
  \sum 
  \frac{1}{s_1s_2\cdots s_{2k}}
  <
  k^2
  \sum_{u=1}^\infty
  \frac{1}{u^2}
  \Big(\sum_{s\le u}\frac{1}{s}\Big)^{2k-2}
  \ll
  \sum_{u=1}^\infty
  \frac{(\log u)^{2k-2}}{u^2}
  \ll 1.
 \]
 Then \eqref{auxm2} is $O\big((\log R)^{k-1}\big)$.
\end{proof}

\begin{proof}[Proof of Theorem~\ref{visible}]
 Take $k=\big\lceil p/(2p-2)\big\rceil$.
 By Theorem~\ref{shorter} and H\"older's inequality
 \[
  R^p(\log R)^p
  \ll 
  \Big(\int |g_\sigma E^*|\, d\nu\Big)^p
  \le 
  \int |E^*|^p\, d\nu
  \cdot 
  \Big(\int |g_\sigma |^q\, d\nu\Big)^{p/q}.
 \]
 The last factor is at most $\Big(\int |g_\sigma |^{2k}\, d\nu\Big)^{p/2k}\ll (\log R)^{p/2}$ by Lemma~\ref{moments} with~$\sigma$ small enough.
\end{proof}

Before entering into the proof of Theorem~\ref{shorter}, let us recall something else about the notation and results in \cite{ChCrUb}, namely Lemma~2.1 and Lemma~2.2 there. 
\smallskip 
 
Let $E(R)$ be the lattice point error for the ball i.e.,
$\#\big\{\vec{n}\in\Z^3\,:\,\|\vec{x}\|\le R\big\}-\frac{4}{3}\pi R^3$.
This quantity  is related to $E^*$ through the formula 
\begin{equation}\label{all2vib}
 E^*(t)
 =
 \sum_{d\le 2R}
 \mu(d) E(t/d)
 +
 o(t)
 \qquad\text{for}\quad 1<t<2R.
\end{equation}
A smoothed Voronoi formula for $E(R)$ is (cf. \cite[\S4.4]{IwKo})
\begin{equation}\label{sphf}
 E(t)
 =
 -
 \frac{R}{\pi}
 \sum_{n=1}^\infty
 \frac{a_n}{\sqrt{n}}
 \cos\big(2\pi t\sqrt{n}\big)
 +
 T(t)+U(t)
\end{equation}
where
\[
 a_n
 =
 \frac{r_3(n)}{\sqrt{n}}
 \widehat{\phi}
 \Big(
 \frac{\sqrt{n}}{M}
 \Big),
 \quad 
 M=\frac{R}{(\log R)^{1/3}},
 \quad 
 \phi\in C_0^\infty\big((-1,1)\big) \text{ even, }
 \widehat{\phi}(0)=1
\]
and $T$ and $U$ are less important terms. 

Using \eqref{all2vib} and \eqref{sphf}, when computing $I(R)$ it appears a term of the form 
\begin{equation}\label{maincon}
 -\frac{1}{\pi}
 \sum_{d<2R}
 \sum_{m\le R^\sigma}
 \sum_{n=1}^\infty
 \frac{\mu(d)a_n}{d\sqrt{mn}}
 \int 
 t\cos\big(2\pi t\sqrt{m}\big)
 \cos\big(2\pi \frac{t}{d}\sqrt{n}\big)
 \, d\nu(t).
\end{equation}

In Proposition 3.1, 3.2 and 3.3 of \cite{ChCrUb} it is proved 
that the terms with $\sqrt{m}\ne \sqrt{n}/d$ as well as those coming from the integration of $T$ and $U$ contribute 
$O\big(R(\log R)^{5/6}\big)$
when $R^\sigma$ is replaced by $M^2$ in the definition of $g_\sigma$.
The important point is that these terms are estimated in absolute value and then the bound still holds with our $g_\sigma$ because $\sigma<2$ assures $R^\sigma=o(M^2)$ and there are less terms. 

\begin{proof}[Proof of Theorem~\ref{shorter}]
 After the previous comments, we have to prove that the terms in \eqref{maincon} with $\sqrt{m}=\sqrt{n}/d$ contribute $-\sigma C R\log R$ asymptotically. 
 For them the integral in \eqref{maincon} is 
 \[
  \int
  t
  \cos^2\big(2\pi t\sqrt{m}\big)
  \, d\nu(t)
  =
  \frac{1}{2}R
  \int_1^2 
  t\psi(t)\, dt
  +
  \frac{R}{2}
  \int_1^2 
  t\psi(t)\cos\big(4\pi Rt\sqrt{m}\big)\, dt.
 \]
 By repeated partial integration, the last integral decays faster than any negative power of $R$. Then the result is deduced if we prove 
 \begin{equation}\label{maincon2}
  M_\sigma(R)
   \sim 
  2\pi C_0
  \sigma \log R
  \qquad\text{with}\quad
  M_\sigma(R)
  =
  \sum_{d<2R}
  \mathop{\sum_{m\le R^\sigma}\sum_{n=1}^\infty}_{d\sqrt{m}=\sqrt{n}} 
  \frac{\mu(d)a_n}{d\sqrt{mn}}.
 \end{equation}
 In \cite{ChCrUb} the range of $m$ and the range of $n$ in which $a_n$ is not negligible were balanced and $d$ was essentially only subject to $d^2\mid n$. In this situation, the identity
 $\sum_{d^2\mid n}\mu(d)=\mu^2(n)$ and its positivity play an important role. Now, the ranges of $m$ and $n$ are unbalanced and small values of $d$ are forbidden for $n$ large, ruining the application of the exact identity. It forces to take a roundabout way with similar ingredients. 
 
 Substituting $n=md^2$ and the definition of $a_n$
 \[
  M_\sigma(R)
  =
  \sum_{d<2R}
  \frac{\mu(d)}{d^6}
  \sum_{m\le R^\sigma}
  r_3(md^2)f_d(m)
  \quad\text{with}\quad
  f_d(x)
  =
  \Big(\frac{d}{\sqrt{x}}\Big)^{3}\widehat{\phi}\Big(\frac{d\sqrt{x}}{M}\Big).
 \]
 
 The properties of the Hecke operators give \cite[\S7]{HiSe}, for $d$ squarefree, 
 \[
  r_3(md^2)
  \le 
  r_3(m)
  \prod_{p\mid d}(p+2)
  <
  r_3(m)d
  \prod_{p\mid d}\big( 1+p^{-1}\big)^2
  \ll
  r_3(m)(\log\log d)^2d. 
 \]
 Then the contribution to $M_\sigma(R)$ of $m\le d^2$ is bounded by
 \[
  \sum_{d<2R}
  \frac{1}{d^6}
  \sum_{m\le d^2}
  \frac{r_3(m)(\log\log d)^2d^4}{m^{3/2}}
  \ll 
  \sum_{d<2R}
  \frac{(\log\log d)^2}{d^2}\log d\ll 1
 \]
 because $\sum_{m\le N} r_3(m)\ll N^{3/2}$. 
 Consequently, we can restrict the sum in $M_\sigma(R)$ to $d^2<m\le R^\sigma$ losing $O(1)$.
 
 On the other hand, Gauss' elementary geometric argument to count lattice points \cite{ivicetall} (cf. \cite[\S2]{ChCrUb}) applied to the lattice $(d^2\Z)^3$ shows for $N\ge d^2$  
 \[
  R_d(N)
  =
  \frac{4\pi}{3}
  \Big(\frac{N}{d^2}\Big)^{3/2}
  +
  O\Big(\frac{N}{d^2}\Big)
  \qquad\text{where}\quad
  R_d(N)
  =
  \frac{1}{v(d)}
  \sum_{0\le m\le N}
  r_3(md^2)
 \]
 with $v(d)$ the number of solutions of $x^2+y^2+z^2\equiv 0\mod{d^2}$.
 
 By Abel's summation formula,
 \[
  \sum_{d^2<m\le R^\sigma}
  f_d(m)\frac{r_3(md^2)}{v(d)}
  =
  R_d(R^\sigma)f_d(R^\sigma)
  -
  R_d(d^2)f_d(d^2)
  -
  \int_{d^2}^{R^\sigma}
  R_d(t) f'_d(t)\,dt.
 \]
 It is easy to see $f_d(t),\, tf_d'(t)\ll d^3t^{-3/2}$, because $\widehat{\phi}(z)$ and $z{\widehat{\phi}}'(z)$ are bounded (and rapidly decreasing). It implies that 
 $R_d(t)f_d(t)$ and $d^{-2}\int_{d^2}^{R^\sigma} t\big|f_d'(t)\big|\, dt$ are $O(1)$.
 Recalling the last part of Lemma~\ref{v_eval},
 \[
  M_\sigma(R)
  =
  -\frac{4\pi}{3}\sum_{d<R^{\sigma/2}}
  \frac{\mu(d)v(d)}{d^9}
  \int_{d^2}^{R^\sigma}
  t^{3/2} f'_d(t)\,dt
  +O(1).
 \]
 The range $d<R^{\sigma/2}$ is forced by the previous restriction to $d^2<m\le R^\sigma$.
 Integrating by parts, this equals
 \[
  2\pi\sum_{d<R^{\sigma/2}}
  \frac{\mu(d)v(d)}{d^9}
  \int_{d^2}^{R^\sigma}
  t^{1/2} f_d(t)\,dt
  +O(1).
 \]
 Unwrapping the definition of $f_d(t)$
 and introducing
 \[
  s_d
  =
  2\pi\chi(d)
  \int_{d^2}^{R^\sigma}
  t^{-1}
  \widehat{\phi}\Big(\frac{d\sqrt{t}}{M}\Big)\, dt
  -2\pi\sigma\log R
 \]
 with $\chi$ the characteristic function of $[1,R^{\sigma/2}]$, we have
 \[
  M_\sigma(R)
  =
  2\pi\sigma(\log R)
  \sum_{d=1}^\infty
  \frac{\mu(d)v(d)}{d^6}
  +
  \sum_{d=1}^\infty
  \frac{\mu(d)v(d)}{d^6}s_d
  +O(1).
 \]
 The first sum is, by Lemma~\ref{v_eval},
 \[
  \prod_p
  \Big(1-\frac{v(p)}{p^6}\Big)
  =
  \Big(1-\frac{8}{2^6}\Big)
  \prod_p
  \Big(1-\frac{p^2+p-1}{p^4}\Big)
  =
  C_0.
 \]
 
 It remains to check that the second sum is negligible. Choose $\delta = \frac 14 \min(\sigma,2-\sigma)$.
 Recall that $\widehat{\phi}$ is regular, even and $\widehat{\phi}(0)=1$, so  $\widehat{\phi}(x)=1+O(x^2)$. Substituting this in $s_d$, 
 \[
  s_d\ll \log d+
  \int_{d^2}^{R^\sigma}
  \frac{d^2}{M^2}\,dt
  \ll \log d
  \qquad\text{for $d\le R^\delta$}
 \]
 giving a bounded contribution to the sum. 
 For $d\ge R^\delta$ we use the trivial bound $s_d=O(\log R)$ and Lemma~\ref{v_eval} to get 
 \[
  (\log R)\sum_{d>R^{\delta}}\frac{\log\log d}{d^2}
  \ll 
  \frac{\log\log R}{R^{\delta}}\log R=o(1).
 \]
 Summing up, $M_\sigma(R)=2\pi\sigma C_0\log R+O(1)$, which is a strong form of the required formula \eqref{maincon2}.
\end{proof}
 
\

The resonator giving  Theorem~\ref{gaps} 
is a variant of the Fej\'er kernel and the gaps between the frequencies  assure that we can capture one of them without interferences, obtaining the expected correlation. 
Namely, we choose
\[
  g(x)
  =
  \frac{e(-\nu_n x)}{\Lambda_n}
  \Big(
  \frac{\sin(\pi\Lambda_n x)}{\sin(\pi x)}
  \Big)^2
  =
  \sum_{k=-\Lambda_n}^{\Lambda_n}
  \Big(
  1-\frac{|k|}{\Lambda_n}
  \Big)
  e\big( (k-\nu_n)x\big).
\]
This idea appeared in a different context in  \cite[Prop.\,3.3]{ChUb}. 

\begin{proof}[Proof of Theorem~\ref{gaps}] 
 Let $S$ be the sine sum in the statement.  Since $e(x)+e(-x)=2i\sin(2\pi x)$  and $\Lambda_n\ge |\nu_n-\nu_m|$ for $m\ne n$,
 \[
  \int_{-1/2}^{1/2} S(t)g(t)\, dt = \frac 12 a_n
  \qquad\text{where $g$ is as above}.
 \]
 Hence for $x$ in a positive measure subset of  $\big[-\frac 12,\frac 12\big]$ we have
 \[
  |x|^{-\alpha}\big| S(x)\big| 
  \int_{-1/2}^{1/2} |t|^\alpha |g(t)|\, dt 
  \ge
  \frac 12 |a_n|
 \]
 and the result follows if we check (note that $|g|$ is even)
 \begin{equation}\label{auxgap}
  \mathcal{B}_{n,\alpha}
  \int_{0}^{1/2} t^\alpha \Lambda_n|g(t)|\, dt 
  <
  \Lambda_n.
 \end{equation}
 Let $I_1$ and $I_2$ be the contributions to the integral of $t\le (\pi\Lambda_n)^{-1}$ and $t\ge (\pi\Lambda_n)^{-1}$. In $I_1$ we use the trivial bound 
 $\Lambda_n|g(t)|\le \Lambda_n^2$
 and in $I_2$, 
 $\Lambda_n|g(t)|<\csc^2(\pi t) \le (\pi t)^{-2}+0.6$
 because $\csc^2 t-t^{-2}$ is increasing in $(0,\pi/2]$. Substituting these bounds, 
 \[
  I_1+I_2
  <
  \frac{1}{\pi^{\alpha+1}}
  \Big(
  \frac{2\Lambda^{1-\alpha}}{1-\alpha^2}
  -
  \frac{(2/\pi)^{1-\alpha}}{1-\alpha}
  \Big)
  +
  0.3
  <
  \frac{(\pi\Lambda)^{1-\alpha}}{1-\alpha^2}
  -
  \frac{2^{-\alpha}}{1-\alpha}
  +
  0.3
 \]
 for $\alpha<1$ and the limit $\alpha\to 1^+$ makes sense. 
 The function $f(\alpha)= 2^{-\alpha} +0.3(\alpha-1)-(1+\alpha)^{-1}\big(\frac{\pi}{5}\big)^{1-\alpha}$ is decreasing in $[0,1]$ (a tedious proof consists in subdividing the interval in a number of pieces and use trivial bounds to get $f'<0$ in each of them). Then $f(\alpha)\ge f(1)=0$ and we can add $f(\alpha)/(1-\alpha)$ to the previous bound for $I_1+I_2$ to obtain
 \[
  I_1+I_2
  <
  \frac{\Lambda_n^{1-\alpha}-(1/5)^{1-\alpha}}{(1-\alpha^2)\pi^{\alpha-1}}
 \]
 and \eqref{auxgap} follows. 
\end{proof}

\begin{proof}[Proof of Theorem~\ref{thconv}]
 We take as resonator the shifted Dirichlet kernel 
 \[
  g(t)
  =
  \sum_{n=1}^N e(-nt).
 \]
 Consider a probability measure $d\mu=R^{-1}\psi(x/R)\, dx$
 with $\psi(x)\ne 0$ for $x\in [1,2]$, $\psi\in C_0^\infty\big((1/2,5/2)\big)$. We are going to prove
 \begin{equation}\label{main2}
  \int Gg\, d\mu\sim \mathcal{B}.
 \end{equation}
 Hence Cauchy's inequality and $\int |g|^2\, d\mu \ll N$ imply 
 $\int |G|^2\, d\mu \gg \mathcal{B}^2/N$ giving the result. 
 
 Substituting the definitions of $G$ and $g$ and changing the variable $t=Rx$, the integral \eqref{main2} is 
 \[
  I =
  \sum_{m=1}^N
  \sum_{n=1}^N
  \sum_{d=1}^\infty
  f(d)a_n
  \int \alpha_d(x) 
  e\big( R(n/d-m)x\big)\, dx
 \]
 with $\alpha_d(x)=\phi(Rx/d)\psi(x)$.
 
 The terms with $m=n/d$ contribute 
 \[
  \sum_{n=1}^N
  a_n
  \sum_{d\mid n}
  f(d)
  \int \alpha_d(x) 
  \, dx
  =
  \mathcal{B}
 \]
 because $R\ge 4N$ assures $\alpha_d=\psi$ for $d<R/4$ and $\int\psi=1$. 
 
 On the other hand, if $d>3R$ then $\alpha_d$ is  identically zero. Hence, for any $d$ by partial integration $ \widehat{\alpha}_d(\xi)\ll (1+|\xi|)^{-2K}$. Then the contribution of the terms with $m\ne n/d$ is bounded by 
 \[
  \sum_{d\le 3R}
  \mathop{\sum_{m=1}^N\ \sum_{n=1}^N}_{n/d\ne m} 
  |a_n|\Big(1+R\big|\frac nd-m\big|\Big)^{-2K}
  =
  \sum_{\substack{j=-\infty\\ V=2^j}}^\infty
  \mathop{\sum_{d\le 3R}\ \sum_{m=1}^N\ \sum_{n=1}^N}_{\frac{dV}{R}\le |n- md|<\frac{2dV}{R}} 
  |a_n|(1+V)^{-2K}.
 \]
 Write $r=|n-md|$ then $n\equiv \pm r\pmod{d}$ and given $n$, $r$ and $d$, the values of $m$ remains determined. 
 Note that the sum is empty if $d\le R/(2V)$, which forces $V>1/6$. So, we can write the sum as 
 \[
  \sum_{\substack{V=2^j\\ V>1/6}}^\infty
  \sum_{\frac{R}{2V}\le d\le 3R}\ 
  \sum_{\frac{dV}{R}\le r<\frac{2dV}{R}}
  \sum_{\substack{n=1\\ n\equiv \pm r\ (d)}}^N 
  |a_n|(1+V)^{-2K}.
 \]
 Using that $(6V)^K/(1+V)^{2K}$ is bounded, the assumption in the statement assures that this is $o(\mathcal{B})$ proving \eqref{main2}.
\end{proof}

\subsection*{Acknowledgments}
I am deeply indebted to Professor Iwaniec for his decisive help along my career and for his outstanding mathematical contributions, his books and his beautiful lecture notes that are a gift for the number theoretical community. 
I also want to take the opportunity to thank E.~Valenti.
This work was partially supported by the PID2020-113350GB-I00 grant of the MICINN (Spain),
by ``Severo Ochoa Programme for Centres of Excellence in R{\&}D'' (CEX2019-000904-S)
and by the Madrid Government agreement with UAM in the context of the V PRICIT.
I am grateful to A.~Ubis for pointing out some corrections. 


\normalsize





\normalsize
\baselineskip=17pt




\end{document}